\newtheorem{theorem}{Theorem}[section]
\newtheorem{lemma}[theorem]{Lemma}
\newtheorem{corollary}[theorem]{Corollary}
\theoremstyle{definition}
\newtheorem{example}[theorem]{Example}
\theoremstyle{remark}
\newtheorem*{remark}{Remark}
\newcommand{\ang}[1]{\left\langle #1 \right\rangle}
\newcommand{\wh}{\widehat}
\newcommand{\ol}{\overline}
\DeclareMathOperator{\Tr}{Tr}
\DeclareMathOperator{\Cay}{Cay}
\DeclareMathOperator{\Bip}{Bip}
\newcommand{\HS}{\mathrm{HS}}
\newcommand{\x}{\times}
\newcommand{\e}{\epsilon}
\newcommand{\cut}{\mathsf{cut}}
\newcommand{\G}{\mathsf{G}}
\newcommand{\EE}{\mathbb{E}}
\newcommand{\CC}{\mathbb{C}}
\newcommand{\DD}{\mathbb{D}}
\newcommand{\HH}{\mathbb{H}}
\newcommand{\RR}{\mathbb{R}}
\begin{document}

\begin{frontmatter}[classification=text]


\author[dc]{David Conlon\thanks{The first author was supported by a Royal Society University Research Fellowship and ERC Starting Grant 676632.}}
\author[yz]{Yufei Zhao\thanks{The second author was supported by an Esm\'ee Fairbairn Junior Research Fellowship at New College, Oxford.}}

\begin{abstract}
We prove that the properties of having small discrepancy and having small second eigenvalue are equivalent in Cayley graphs, extending a result of Kohayakawa, R\"odl, and Schacht, who treated the abelian case. The proof relies on Grothendieck's inequality. As a corollary, we also prove that a similar result holds in all vertex-transitive graphs.
\end{abstract}
\end{frontmatter}


\section{Introduction} \label{sec:intro}

A fundamental result of Chung, Graham, and Wilson~\cite{CGW89}, building on earlier work of Thomason~\cite{T87, T872}, states that for a sequence of graphs of density $p$, where $p$ is a fixed positive constant, a number of seemingly distinct notions of quasirandomness are equivalent. The following theorem details some of these equivalences.

\begin{theorem}[Chung--Graham--Wilson] \label{CGW}
For any fixed $0 < p < 1$ and any sequence of graphs $(\Gamma_n)_{n \in \mathbb{N}}$ with $|V(\Gamma_n)| = n$, the following properties are equivalent:
\begin{itemize}
\item[$P_1$:]
for all subsets $S,T \subseteq V(\Gamma_n)$, $e(S,T) = p |S||T| + o(pn^2)$;

\item[$P_2$:]
$e(\Gamma_n) \geq (1+o(1))p \binom{n}{2}$, $\lambda_1(\Gamma_n) = (1 + o(1)) p n$ and $|\lambda_2(\Gamma_n)| = o(pn)$, where $\lambda_i(\Gamma_n)$ is the $i$th largest eigenvalue, in absolute value, of the adjacency matrix of $\Gamma_n$;

\item[$P_3$:]
for all graphs $H$, the number of labeled copies of $H$ in $\Gamma_n$ is $(1 + o(1)) p^{e(H)} n^{v(H)}$.

\end{itemize}
\end{theorem}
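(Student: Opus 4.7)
The plan is to prove the cyclic implications $P_2 \Rightarrow P_1 \Rightarrow P_3 \Rightarrow P_2$. This is the most efficient route, since $P_3$ contains enough subgraph counts (specifically, of $K_2$ and $C_4$) to recover both $\lambda_1$ and $|\lambda_2|$ through the identity $\Tr(A^k) = \sum_i \mu_i^k$, while $P_2$ directly controls quadratic forms of the adjacency matrix.

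\emph{Step 1 ($P_2 \Rightarrow P_1$):} This is the expander mixing lemma. I would spectrally decompose $A = A(\Gamma_n) = \sum_i \mu_i v_i v_i^\top$, with $|\mu_i|$ nonincreasing. For $S, T \subseteq V(\Gamma_n)$,
\[
e(S,T) = \mathbf{1}_S^\top A \mathbf{1}_T = \mu_1 \ang{v_1,\mathbf{1}_S}\ang{v_1,\mathbf{1}_T} + \sum_{i \ge 2} \mu_i \ang{v_i,\mathbf{1}_S}\ang{v_i,\mathbf{1}_T}.
\]
The tail is at most $|\lambda_2| \sqrt{|S||T|} = o(pn^2)$ by Cauchy--Schwarz. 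For the main term, using $\sum_i \mu_i^2 = 2e(\Gamma_n) = (1+o(1))pn^2$, $\mu_1 = \lambda_1 = (1+o(1))pn$, and $|\mu_i| \le |\lambda_2| = o(pn)$ for $i \ge 2$, I would show that the Perron eigenvector $v_1$ is within $o(1)$ of $\mathbf{1}/\sqrt{n}$ in $\ell_2$; concretely, testing $A$ against $\mathbf{1}$ yields $\ang{v_1,\mathbf{1}}^2 \ge (1-o(1))n$. Substituting back gives the main term equal to $p|S||T| + o(pn^2)$.

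\emph{Step 2 ($P_1 \Rightarrow P_3$):} This is the counting lemma. Proceeding by induction on $e(H)$, I would pick an edge $uv$ of $H$, express the count of labeled copies of $H$ in $\Gamma_n$ as a sum over extensions of copies of $H \setminus \{uv\}$, and use the discrepancy hypothesis to replace each edge indicator by $p$ up to an additive error of $o(pn^2)$. Iterating over all edges---cleanest when done by applying Cauchy--Schwarz once per edge to absorb the defect uniformly over the unspecified vertices---yields the prediction $(1+o(1))p^{e(H)} n^{v(H)}$.

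\emph{Step 3 ($P_3 \Rightarrow P_2$):} The count for $K_2$ gives $e(\Gamma_n) = (1+o(1))p\binom{n}{2}$, whence $\sum_i \mu_i^2 = 2e(\Gamma_n) = (1+o(1))pn^2$ and $\mu_1 \ge 2e(\Gamma_n)/n = (1+o(1))pn$. The count for $C_4$ gives $\sum_i \mu_i^4 = \Tr(A^4) = (1+o(1))p^4 n^4$, once one absorbs the closed walks of length $4$ that are not genuine $4$-cycles into the error. Combined with $\mu_1^4 \le \sum_i \mu_i^4$, this forces $\mu_1 = (1+o(1))pn$, so $\lambda_1 = (1+o(1))pn$, and then $\lambda_2^4 \le \sum_{i \ge 2} \mu_i^4 = o(p^4 n^4)$ gives $|\lambda_2| = o(pn)$. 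The main obstacle will be Step 2: iterating the discrepancy condition across all edges of $H$ while keeping the accumulated error additive (rather than multiplicative in $n$) requires some care, though the argument is by now standard.
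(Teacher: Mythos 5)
The paper does not prove Theorem~\ref{CGW}; it simply cites Chung--Graham--Wilson~\cite{CGW89} as background. Your three-step cycle $P_2 \Rightarrow P_1 \Rightarrow P_3 \Rightarrow P_2$ is a clean modern route and the sketch is essentially correct. The original CGW paper works through a longer chain of seven or so equivalent properties, whereas you isolate exactly the three listed here, which is a reasonable streamlining and arguably closer to how the result is taught today. A few small points worth tightening. In Step 1, to get the main term you need $e(\Gamma_n) \le (1+o(1))p\binom{n}{2}$ as well as the stated lower bound; this does follow, but only by noting $\lambda_1 \ge 2e(\Gamma_n)/n$, so the hypothesis $\lambda_1 = (1+o(1))pn$ caps the edge count from above --- this is implicit in ``testing $A$ against $\mathbf{1}$'' but deserves a sentence. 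In Step 2, the simple edge-deletion induction you describe already works without Cauchy--Schwarz: fixing the images of $V(H)\setminus\{u,v\}$ reduces the error for the deleted edge to a single application of $P_1$ on the resulting sets $S,T$, giving error $o(pn^2)$ per fixing and hence $o(n^{v(H)})$ overall; since $e(H)$ is a constant and $p$ is fixed, the accumulated additive error stays $o(n^{v(H)}) = o(p^{e(H)}n^{v(H)})$. The Cauchy--Schwarz version is more robust (and needed in sparse settings), but here it is optional. In Step 3 you should say explicitly that the nonnegative Perron eigenvalue equals $\mu_1$ so that $\lambda_1 = \mu_1$, and that $\Tr(A^4)$ differs from the labeled $C_4$ count only by $O(n^2)$ degenerate closed walks; both are easy but worth stating. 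No genuine gap.
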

\noindent
Here, the {\it adjacency matrix} $A(\Gamma)$ of an $n$-vertex graph $\Gamma$ is the $n \x n$ matrix $(a_{st})_{s, t \in [n]}$ where  $a_{st} = 1$ if $s$ and $t$ are adjacent in $\Gamma$ and $0$ otherwise.

In this paper, we will be concerned with studying the extent to which this theorem, and particularly the equivalence between the first two properties, extends to sparse graphs, that is, graphs where the density $p$ tends to zero. We will focus our attention on regular graphs. We then say that an $n$-vertex $d$-regular graph $\Gamma = (V,E)$ is {\it $\epsilon$-uniform} if, for all $S,T \subseteq V$,
\[\left| e(S,T) - \frac{d}{n}|S||T|\right| \le \epsilon dn.\]
The equivalence between properties $P_1$ and $P_2$ of Theorem~\ref{CGW} now implies that if $p$ is fixed and $\Gamma_n$ is a sequence of graphs with $|V(\Gamma_n)| = n$ and $\Gamma_n$ regular of degree $d_n = pn$, then the sequence $\Gamma_n$ is $o(1)$-uniform if and only if $|\lambda_2(\Gamma_n)| = o(d_n)$. 

One direction of this equivalence follows from the famous expander mixing lemma. This says that if $\Gamma = (V, E)$ is an {\it $(n, d, \lambda)$-graph}, that is, an $n$-vertex $d$-regular graph such that all eigenvalues of the adjacency matrix $A(\Gamma)$, except the largest, are bounded above in absolute value by $\lambda$, then  
\[\left| e(S,T) - \frac{d}{n}|S||T|\right| \le \lambda \sqrt{|S||T|}\]
for all $S,T \subseteq V$. Thus, if $|\lambda_2(\Gamma)| \leq \e d$, $\Gamma$ is $\e$-uniform. Note that this statement does not rely on knowing that $d$ is large, applying just as well when it is constant as when it is on the order of $n$. This observation is of fundamental importance in the study of expander graphs (see~\cite{HLW06}).

Answering a question of Chung and Graham~\cite{CG02}, Krivelevich and Sudakov~\cite{KS06} showed that the converse does not hold. That is, there are sequences of sparse graphs for which the discrepancy condition $P_1$ of Theorem~\ref{CGW} does not imply the eigenvalue condition $P_2$. However, their example is not regular (and a later example due to Bollob\'as and Nikiforov~\cite{BN04} is not connected), so we give an alternative example below.

\begin{example}\label{nonequiv}
Take three disjoint vertex sets $U$, $V$, and $W$, with $|U| = |V| = t$ and $|W| = n - 2t$. Choose a $d$-regular graph $\Gamma$ on vertex set $U \cup V \cup W$ such that every vertex in $U$ has no neighbours in $U$ and exactly $\lambda$ neighbours in $V$, and vice versa for $V$, while each vertex in $W$ has the same number of neighbours in $U$ and in $V$. By taking the vector $y = (y_1, \dots, y_n)$ with $y_j = 1$ if $j \in U$, $-1$ if $j \in V$, and 0 otherwise, we see that $-\lambda$ is an eigenvalue of the adjacency matrix $A(\Gamma)$. More concretely, suppose $d$ is even, let $t = \lambda = d/2$, and let the bipartite graph between $U$ and $V$ be complete. We split the remaining set of vertices $W$ into two subsets $W_0$ and $W_1$, where $|W_1| = d^2/4$ and each vertex in $W_1$ is joined to both endpoints of a unique associated edge in $\Gamma[U,V]$, while no vertex in $W_0$ is joined to $U \cup V$. In $W$, we place a random graph such that every vertex in $W_0$ has degree $d$ and every vertex in $W_1$ has degree $d-2$. The resulting graph is $d$-regular. Moreover, provided $d \rightarrow \infty$ and $d = o(n)$, a simple calculation (similar to the proof of Proposition 12 in~\cite{BN04}) implies that $G$ is $o(1)$-uniform, despite having $-d/2$ as an eigenvalue.
\end{example}

Given such examples, a recent result of Kohayakawa, R\"odl, and Schacht~\cite{KRS16} comes as something of a surprise. Suppose that $G$ is a finite group and $S$ is a subset of $G$. The (directed) {\it Cayley graph} $\Cay(G,S)$ is formed by taking the elements of $G$ as the vertex set and $\{(sg,g) : g \in G, s \in S\}$ as the set of (directed) edges. If $S$ is symmetric, that is, $S = S^{-1}$, the graph can be viewed as undirected. Unless stated otherwise, we will always use the term Cayley graph to refer to such an undirected graph with a symmetric generating set. We note that many of the standard examples of quasirandom graphs, including Paley graphs and the Ramanujan graphs of Lubotzky, Phillips, and Sarnak~\cite{LPS88} and Margulis~\cite{M88}, are Cayley graphs.

\begin{theorem}[Kohayakawa--R\"odl--Schacht] \label{KRS}
Let $G$ be an abelian group. Then every $\e$-uniform Cayley graph $\Cay(G, S)$ is an $(n,d,\lambda)$-graph with $n = |G|$, $d = |S|$, and $\lambda \le C\epsilon d$ for some absolute constant $C$.
\end{theorem}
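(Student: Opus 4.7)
The plan is to exploit the fact that, for abelian $G$, the adjacency matrix $A$ of $\Cay(G,S)$ is simultaneously diagonalized by the characters $\chi \colon G \to \CC^\times$, with eigenvalues $\lambda_\chi = \sum_{s \in S}\chi(s)$ (real, since $S = S^{-1}$ and $|\chi(s)|=1$). So it suffices to show $|\lambda_\chi| \le C\e d$ for every nontrivial character $\chi$, and I would do this by writing the eigenvalue as a bilinear form on indicator vectors and applying the $\e$-uniformity hypothesis directly.

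Starting from $n\lambda_\chi = \langle A\chi, \chi\rangle$ and splitting $\chi = \chi_R + i\chi_I$, the symmetry of $A$ gives $n\lambda_\chi = \langle A\chi_R, \chi_R\rangle + \langle A\chi_I, \chi_I\rangle$. It therefore suffices to bound $|\langle Af, f\rangle| = O(\e d n)$ for each $f \in \{\chi_R, \chi_I\}$; note that $f$ is a real function with $\|f\|_\infty \le 1$ and, crucially, $\sum_{g \in G} f(g) = 0$ (by nontriviality of $\chi$). For such $f$ I would use the layer-cake identity
\[ f = \int_0^1 (\mathbf{1}_{A_t} - \mathbf{1}_{B_t})\, dt, \qquad A_t = \{g : f(g) > t\},\;\; B_t = \{g : f(g) < -t\}, \]
which allows one to expand
\[ \langle Af, f\rangle = \int_0^1\!\!\int_0^1 \bigl[\,e(A_t,A_{t'}) - e(A_t,B_{t'}) - e(B_t,A_{t'}) + e(B_t,B_{t'})\,\bigr]\, dt\, dt'. \]

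Applying the discrepancy hypothesis $|e(X,Y) - (d/n)|X||Y|| \le \e dn$ to each of the four edge counts, the main terms assemble into
\[ \frac{d}{n}\Bigl(\int_0^1(|A_t| - |B_t|)\, dt\Bigr)^{\!2} = \frac{d}{n}\Bigl(\sum_{g \in G} f(g)\Bigr)^{\!2} = 0, \]
while the error contributions integrate to at most $4\e dn$. Summing the two contributions from $\chi_R$ and $\chi_I$ yields $|\lambda_\chi| \le 8\e d$. The only subtle step is that the main term vanishes, which is precisely where character orthogonality (hence the abelian hypothesis) enters in an essential way. For a general group the irreducible representations are higher dimensional and this pointwise level-set decomposition of an eigenvector is unavailable, which I would expect to be the real obstacle and the reason the paper turns to Grothendieck's inequality to push beyond the abelian case.
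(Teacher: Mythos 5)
Your proof is correct, and it arrives at the same constant ($C=8$) as Theorem~\ref{thm:cayley}. The approach is essentially the same as the paper's appendix proof (Theorem~\ref{thm:abelian}), with the underlying idea identical: the extremizing eigenvector of a Cayley graph over an abelian group is a character $\chi$, which satisfies $\|\chi\|_\infty = 1$, and hence is a legal test function for the discrepancy bound. The paper packages this as $\|f\| = \|f\|_{\infty\to 1}^\CC$ and then cites the equivalence between the complex $\infty\to 1$ norm and the cut norm; you instead unwind that equivalence by hand via the layer-cake decomposition of $\chi_R$ and $\chi_I$ into level sets, which makes the argument self-contained. Either route works, and yours has the virtue of showing explicitly how the discrepancy hypothesis is consumed.

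One small misattribution in your closing commentary: the step where the abelian hypothesis enters ``in an essential way'' is not the vanishing of the main term. The identity $\sum_g f(g) = 0$ holds for any eigenvector orthogonal to the constant vector, in any group, so the main term would vanish in the non-abelian setting too. What genuinely breaks is the bound $\|f\|_\infty \le 1$. For a $d_\rho$-dimensional irreducible representation $\rho$, the associated eigenvectors of $A(f)$ are of the form $g \mapsto \sqrt{d_\rho}\, v^*\rho(g)^* u$ (normalized in $L^2$ under the averaging measure), which have sup-norm as large as $\sqrt{d_\rho}$ rather than $1$. This is exactly what the paper's remark about ``coordinate-wise uniformly bounded eigenbases'' is getting at. The Grothendieck norm repairs this by allowing Hilbert-space-valued test functions (the translates $x_g = \rho(g)v$ remain unit vectors even when the scalar eigenvector is unbounded), which is why the non-abelian case routes through Grothendieck's inequality rather than through the $\infty\to 1$ norm.
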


The main result of this paper generalizes Theorem~\ref{KRS} to any finite group.

\begin{theorem}\label{thm:cayley}
Every $\e$-uniform Cayley graph $\Cay(G, S)$ is an $(n,d,\lambda)$-graph with $n = |G|$, $d = |S|$, and $\lambda \le 8\epsilon d$.
\end{theorem}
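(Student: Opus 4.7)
The plan is to combine the non-abelian Fourier decomposition of a Cayley adjacency matrix with Grothendieck's inequality. The decomposition will turn a hypothetically large eigenvalue of $A = A(\Cay(G,S))$ into a pair of unit vectors witnessing $\|\wh S(\rho)\|_{\mathrm{op}}$ for some non-trivial irreducible representation $\rho$, and Grothendieck's inequality will then convert those vectors into a pair of subsets of $G$ with large discrepancy, contradicting $\e$-uniformity.

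For each (unitary) irreducible representation $\rho \colon G \to U(V_\rho)$, set $\wh S(\rho) = \sum_{s \in S} \rho(s)$. Standard Peter--Weyl theory identifies the spectrum of $A$ with the union, over $\rho$, of the spectra of the matrices $\wh S(\rho)$, with the trivial representation supplying the degree eigenvalue $d$. It therefore suffices to prove $\|\wh S(\rho)\|_{\mathrm{op}} \le 8\e d$ for every non-trivial $\rho$. Fix such a $\rho$, let $\lambda = \|\wh S(\rho)\|_{\mathrm{op}}$, and pick unit vectors $u, v \in V_\rho$ with $u^* \wh S(\rho) v = \lambda$ (after a phase rotation we may take $\lambda \ge 0$). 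Define $\psi, \phi \colon G \to V_\rho$ by $\psi(g) = \rho(g) u$ and $\phi(g) = \rho(g) v$; unitarity of $\rho$ makes each $\psi(g)$ and $\phi(g)$ a unit vector. Using the convention $A_{g,h} = \mathbf{1}[g^{-1}h \in S]$ (which defines the same undirected graph as the paper's convention up to the inversion automorphism $g \mapsto g^{-1}$), the homomorphism property of $\rho$ gives
\[\sum_{g, h \in G} A_{g,h} \ang{\psi(g), \phi(h)} \;=\; \sum_{g \in G,\, s \in S} u^* \rho(g)^* \rho(gs) v \;=\; n \cdot u^* \wh S(\rho) v \;=\; n \lambda,\]
and Schur's lemma with character orthogonality ($\sum_g \rho(g)$ commutes with every $\rho(h)$ and has trace $\sum_g \chi_\rho(g) = 0$) yields $\sum_{g \in G} \rho(g) = 0$ for every non-trivial $\rho$, so replacing $A$ by $M = A - \tfrac{d}{n} J$ changes nothing.

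Now I invoke Grothendieck. The $\e$-uniformity hypothesis reads $|\mathbf{1}_{S'}^T M \mathbf{1}_{T'}| \le \e d n$ for every $S', T' \subseteq G$, and writing a $\{\pm 1\}$-vector as a difference of two indicators upgrades this to $\|M\|_{\infty \to 1} := \max_{x, y \in \{\pm 1\}^G} x^T M y \le 4 \e d n$. Treating $V_\rho$ as a real Hilbert space of dimension $2 d_\rho$, so that $\ang{\cdot,\cdot}_{\RR} = \mathrm{Re}\ang{\cdot,\cdot}_{\CC}$, and using that $\lambda$ is real, Grothendieck's inequality with real Grothendieck constant $K_G^{\RR} < 2$ gives
\[n \lambda \;=\; \sum_{g,h} M_{g,h} \,\mathrm{Re}\ang{\psi(g), \phi(h)} \;\le\; K_G^{\RR} \, \|M\|_{\infty \to 1} \;<\; 8 \e d n,\]
and hence $\lambda < 8 \e d$, as required.

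The main obstacle, and the reason the Kohayakawa--R\"odl--Schacht abelian argument does not extend verbatim, is that when $\dim V_\rho > 1$ the vectors $\psi(g), \phi(h)$ are genuinely high-dimensional, and no rank-one factorisation $\ang{\psi, \phi} = \bar\psi \phi$ is available to reduce the bilinear form to a $\{\pm 1\}$-type expression by hand. This is precisely the regime in which Grothendieck's inequality is indispensable for passing from cut-norm control to operator-norm control; in the abelian (one-dimensional) case a two-term real/imaginary split does the job, which is why that proof could avoid Grothendieck. I therefore expect the representation-theoretic setup to be routine bookkeeping and the entire analytic weight of the proof to sit on this single appeal to Grothendieck's inequality.
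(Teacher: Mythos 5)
Your proof is correct. It follows the same representation-theoretic route as the paper's original argument, which appears in the appendix: build the vectors $\rho(g)u$ and $\rho(h)v$ from the top singular pair of $\wh{S}(\rho)$, show via Schur orthogonality that the resulting Grothendieck-type bilinear form recovers $n\lambda$ (and that subtracting $\frac{d}{n}J$ changes nothing for nontrivial $\rho$), and then apply Grothendieck's inequality together with cut-norm control to obtain $\lambda < 8\epsilon d$. In effect you are establishing the one-sided inequality $\|M\|_\G \ge n\|M\|$ for the centered adjacency matrix, which is the nontrivial direction of Theorem~\ref{thm:gp-spec-G}, and then invoking~\eqref{eq:mat-cut-G-r}; the constant arises in both treatments from $4K_\G < 8$. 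It is worth noting that the paper's main-text proof of $\|f\| = \|f\|_\G$ in Section~\ref{sec:cayley} is substantially shorter and avoids representation theory altogether: given optimal $x,y \colon G \to \RR$ attaining the spectral norm, one takes the translates $x_g(\cdot) = x(g\,\cdot)$ and $y_h(\cdot) = y(h\,\cdot)$ as unit vectors in $L^2(G)$ and uses the invariance of $f(gh^{-1})$ under simultaneous right-multiplication of $g$ and $h$ to show $\EE_{g,h} f(gh^{-1})\ang{x_g, y_h} = \|f\|$, with no Fourier or SVD machinery needed. Your route does buy the extra piece of information, recorded in the paper's final remark, that the Hilbert space in the Grothendieck norm can be taken to have dimension at most $\max_{\rho} d_\rho$.
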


It is not hard to lift this result to all vertex-transitive graphs, where a graph is said to be {\it vertex transitive} if the automorphism group of the graph acts transitively on the vertices. Note that Cayley graphs are always vertex transitive, while the example of the Petersen graph shows that the converse is not true.

\begin{corollary} \label{transitive}
Every $n$-vertex $d$-regular $\epsilon$-uniform vertex-transitive graph is an $(n,d,\lambda)$-graph with $\lambda \le 8\epsilon d$.
\end{corollary}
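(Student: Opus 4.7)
The plan is to deduce Corollary~\ref{transitive} from Theorem~\ref{thm:cayley} by passing to a Cayley ``lift'' of $\Gamma$. I would pick any transitive subgroup $G \le \mathrm{Aut}(\Gamma)$, fix a base vertex $v_0$, and write $H = \mathrm{Stab}_G(v_0)$, so that $V(\Gamma)$ is identified with $G/H$ via $\pi \colon g \mapsto gv_0$. Setting $S = \{g \in G : gv_0 \sim v_0\}$, one quickly checks, using that every $h \in H$ is an automorphism fixing $v_0$, that $S = S^{-1} = HSH$, that $S \cap H = \emptyset$, and that $|S| = d|H|$. I would then form $\tilde\Gamma := \Cay(G, S)$, a $D$-regular graph on $N$ vertices with $D = d|H|$ and $N = n|H|$, and verify the key compatibility: $g_1 \sim g_2$ in $\tilde\Gamma$ if and only if $\pi(g_1) \sim \pi(g_2)$ in $\Gamma$. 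In other words, above each edge of $\Gamma$ sits a copy of $K_{|H|,|H|}$, and no edges of $\tilde\Gamma$ lie inside a fibre of $\pi$.

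The first main step is to transfer $\e$-uniformity from $\Gamma$ to $\tilde\Gamma$. Given $\tilde A, \tilde B \subseteq G$, I would introduce the fibre densities $\alpha(v) = |\tilde A \cap \pi^{-1}(v)|/|H|$ and $\beta(v) = |\tilde B \cap \pi^{-1}(v)|/|H|$, both lying in $[0,1]$, so that $|\tilde A| = |H|\sum_v \alpha(v)$, $|\tilde B| = |H|\sum_v \beta(v)$, and the fibre structure above gives
\[
e_{\tilde\Gamma}(\tilde A, \tilde B) = |H|^2 \sum_{v_1 \sim v_2} \alpha(v_1)\beta(v_2),
\]
summed over ordered edges. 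The discrepancy hypothesis on $\Gamma$, integrated against the layer-cake decompositions $\alpha = \int_0^1 \mathbf 1_{\{\alpha > s\}}\, ds$ and $\beta = \int_0^1 \mathbf 1_{\{\beta > t\}}\, dt$, then yields
\[
\biggl|\sum_{v_1 \sim v_2} \alpha(v_1)\beta(v_2) - \frac{d}{n}\Bigl(\sum_v \alpha\Bigr)\Bigl(\sum_v \beta\Bigr)\biggr| \le \e d n,
\]
with no loss in constants. Multiplying through by $|H|^2$ shows $\tilde\Gamma$ is $\e$-uniform, so Theorem~\ref{thm:cayley} gives $\lambda_2(\tilde\Gamma) \le 8\e D = 8\e d|H|$.

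The last step is an eigenvalue lift from $\tilde\Gamma$ back down to $\Gamma$. For any eigenfunction $f$ of $A(\Gamma)$ with eigenvalue $\mu$ and $f \perp \mathbf 1$, I would set $\tilde f := f \circ \pi$. Then $\tilde f \perp \mathbf 1_G$, and because each $g \in G$ has exactly $|H|$ neighbours in $\tilde\Gamma$ above every neighbour of $\pi(g)$ in $\Gamma$, a direct computation gives $A(\tilde\Gamma)\tilde f = |H|\mu \tilde f$. Hence $|H||\mu| \le \lambda_2(\tilde\Gamma) \le 8\e d|H|$, so $|\mu| \le 8\e d$, as required. The only genuinely delicate point in the whole reduction is preserving the constant $8$, which is why I would invoke the layer-cake identity to pass between integral and fractional discrepancy without loss, rather than a random-rounding argument that would bleed constants.
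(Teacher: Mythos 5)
Your proof is correct, and it takes a genuinely different route from the one in the paper, even though both ultimately rest on the Cayley-graph case (Theorem~\ref{thm:cayley}, equivalently the group-level norm identity $\|f\| = \|f\|_\G$).

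The paper does not reduce Corollary~\ref{transitive} to Theorem~\ref{thm:cayley} directly. Instead it works at the level of matrix norms: Lemma~\ref{lem:uplift} associates to a vertex-transitive matrix $A$ and a transitive subgroup $G \le \mathrm{Aut}(A)$ a function $f(g) = a_{g(1),1}$ on $G$, and proves $n\|f\| = \|A\|$ and $n^2\|f\|_\G = \|A\|_\G$ in one uniform argument. Combined with $\|f\| = \|f\|_\G$ (Theorem~\ref{thm:gp-spec-G}) and Grothendieck's inequality, this gives $\|A\|_\cut \le n\|A\| \le 8\|A\|_\cut$ (Corollary~\ref{cor:mat-spec-cut}), from which Corollary~\ref{transitive} drops out by applying it to $A(\Gamma) - \frac{d}{n}J$. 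Your argument instead constructs the same lift as a concrete graph: your $\tilde\Gamma = \Cay(G,S)$ is exactly $\Cay(G,\operatorname{supp}(f))$ with $A = A(\Gamma)$ and base vertex $1$, and it is the $|H|$-blow-up $\Gamma[\ol{K_{|H|}}]$ (with $A(\tilde\Gamma) = A(\Gamma) \otimes J_{|H|}$). You then transfer the discrepancy hypothesis forward via the layer-cake identity (this is precisely the content of the paper's observation \eqref{eq:mat-cut-relax} that the cut norm equals its fractional relaxation, so no constant is lost), apply Theorem~\ref{thm:cayley} to $\tilde\Gamma$, and transfer the spectral conclusion back via the eigenfunction lift $\tilde f = f \circ \pi$. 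What the paper's route buys is generality: Lemma~\ref{lem:uplift} and Theorem~\ref{thm:mat-G-spec} hold for arbitrary real vertex-transitive matrices (weighted, not necessarily symmetric), which the paper needs to state the bipartite analogues cleanly. What your route buys is concreteness: the reduction is a transparent graph covering argument that requires only the Cayley-graph theorem as input and avoids introducing the Grothendieck norm of a general vertex-transitive matrix at all. Both are correct; your version would serve well as a self-contained proof of the corollary for a reader who only cares about the graph statement.

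One minor note: be careful to fix compatible conventions for the group action and the Cayley graph edge rule (left versus right action, $g_1g_2^{-1} \in S$ versus $g_2^{-1}g_1 \in S$), since the compatibility $g_1 \sim_{\tilde\Gamma} g_2 \Leftrightarrow \pi(g_1) \sim_\Gamma \pi(g_2)$ holds with $\pi(g) = gv_0$ under the right-action/right-Cayley convention, or with $\pi(g) = g^{-1}v_0$ under the left-action/left-Cayley convention, but not if the two are mismatched. This does not affect the substance of the argument.
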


Suppose now that $\Gamma$ is an $n$-vertex $d$-regular vertex-transitive graph. If $\Gamma$ is $\epsilon$-uniform, this means that
\begin{equation} \label{weakdisc}
\left| e(S,T) - \frac{d}{n}|S||T|\right| \le \epsilon dn
\end{equation}
for all $S, T \subseteq V(\Gamma)$. Corollary~\ref{transitive} then says that $\Gamma$ is an $(n,d,\lambda)$-graph with $\lambda \leq 8 \epsilon d$. In turn, the expander mixing lemma shows that
\begin{equation} \label{strongdisc}
\left| e(S,T) - \frac{d}{n}|S||T|\right| \le 8 \epsilon d \sqrt{|S||T|}
\end{equation}
for all $S,T \subseteq V(\Gamma)$. Thus, Corollary~\ref{transitive} may be seen as saying that in vertex-transitive graphs the discrepancy condition~\eqref{weakdisc} bootstraps itself to the stronger condition~\eqref{strongdisc}.

For comparison, we note also a result of Bilu and Linial~\cite{BL06} which says that if $\Gamma$ is an $n$-vertex $d$-regular graph satisfying the stronger discrepancy condition~\eqref{strongdisc}, then it is an $(n, d, \lambda)$-graph with $\lambda = O(\epsilon d \log(2/\epsilon))$. Our result shows that in Cayley graphs, one can derive the stronger conclusion $\lambda = O(\epsilon d)$ from the weaker condition \eqref{weakdisc}.

We begin the process of proving Theorem~\ref{thm:cayley} and Corollary~\ref{transitive} in the next section by introducing two matrix norms, the cut norm and the spectral norm, which capture the properties of having small discrepancy and having small second eigenvalue. The rough idea then will be to show that if $A$ is the adjacency matrix of a vertex-transitive graph $\Gamma$, the spectral norm of $A$ agrees with a semidefinite relaxation of the cut norm. 

We note that some similar ideas have been used in this context before. In particular, a semidefinite relaxation was used by Alon et al.~\cite{ACHKRS10} to prove the related result that if a graph $G$ has small discrepancy, then one can remove a small fraction of the vertices so that in the remaining graph all eigenvalues, except the largest, are small.

\section{Norms and relaxations} \label{sec:relax}

For any vector $x = (x_1, \dots, x_n) \in \RR^n$, we define its $p$-norm, $1 \le p \le \infty$, by
\[
|x|_p := (|x_1|^p + \dots + |x_n|^p)^{1/p}
\]
and
\[
|x|_\infty := \max \{ |x_1|, \dots, |x_n|\}.
\]
In particular, we write $|x| := |x|_2$ for the Euclidean norm. The inner product on $\RR^n$ is
\[
\ang{x,y} := x^* y = x_1 y_1 + \cdots + x_n y_n.
\]

Let $A = (a_{st})_{s \in [m], t \in [n]} \in \RR^{m \x n}$ be a matrix (we write $[n] := \{1, \dots, n\}$). We define a number of matrix norms. The most familiar one is the {\it spectral norm}:
\begin{equation} \label{eq:mat-spec}
\|A\| := \sup_{\substack{x \in \RR^n \\ |x| \le 1}} |Ax|
 = \sup_{\substack{x \in \RR^m, y \in \RR^n \\ |x|,|y| \le 1}} |x^*Ay|,
\end{equation}
where the second equality above follows from an application of the Cauchy--Schwarz inequality. Equivalently, $\|A\|$ equals the largest singular value of $A$. If $A$ is symmetric, then $\|A\|$ is also the maximum of the absolute values of the eigenvalues of $A$.

The notion of $\epsilon$-uniformity is captured by the {\it cut norm}:
\[
\|A\|_\cut := \sup_{S \subseteq [m], T \subseteq [n]} \left| \sum_{s \in S, t \in T} a_{st} \right|.
\]
It is not hard to see that the cut norm can be expressed as a linear relaxation:
\begin{equation} \label{eq:mat-cut-relax}
\|A\|_\cut = \sup_{x_1, \dots, x_m, y_1, \dots, y_n \in [0,1]} \left| \sum_{s \in [m], t \in [n]} a_{st} x_s y_t \right| = \sup_{x \in [0,1]^m, y \in [0,1]^n} |x^t A y|.
\end{equation}
To see the validity of this reformulation, note that the expression inside the absolute value is linear individually in each of $x_1, \dots, x_m, y_1, \dots, y_n$, and hence its extremum is attained when all these variables are $\{0,1\}$-valued, which is equivalent to the earlier formulation with subsets $S$ and $T$.

\medskip

We digress for a moment to relate these norms to graphs. Let $\Gamma$ be an $n$-vertex $d$-regular graph and let $A(\Gamma)$ be its adjacency matrix (recall that this is the $n \x n$ matrix $(a_{st})_{s, t \in [n]}$ where  $a_{st} = 1$ if $s$ and $t$ are adjacent in $\Gamma$ and $0$ otherwise). Then $\Gamma$ is an $(n,d,\lambda)$-graph if and only if 
\[
\| A(\Gamma) - \tfrac{d}{n} J \| \le \lambda,
\]
where $J$ is the $n \x n$ all-1's matrix, and $\Gamma$ is $\epsilon$-uniform if and only if 
\[
\|A(\Gamma) - \tfrac{d}{n} J \|_\cut \le \epsilon dn.
\]

\medskip

Returning to the discussion of matrix norms, we further relax the cut norm by allowing each $x_s$ and $y_t$ to be numbers in $[-1,1]$:
\begin{align*}
\|A\|_{\infty \to 1}
&:= \sup_{\substack{x \in \RR^n \\ |x|_\infty \le 1}} |Ax|_1
= \sup_{\substack{x \in \RR^m, y \in \RR^n \\ |x|_\infty,|y|_\infty \le 1}} |x^*Ay| \notag
\\
&= \sup_{x_1, \dots, x_m, y_1, \dots, y_n \in [-1,1]} \left| \sum_{s \in [m], t \in [n]} a_{st} x_s y_t \right|. \label{eq:infty-1-norm-r}
\end{align*}
This norm is equivalent to the cut norm:
\begin{equation}\label{eq:cut-infty-1-r}
  \|A\|_\cut \le \|A\|_{\infty \to 1} \le 4 \|A\|_\cut.
\end{equation}
Indeed, write $x = x_+ - x_-$ and $y = y_+ - y_-$, where $x_+,x_- \in [0,1]^m$ and $y_+,y_- \in [0,1]^n$, and then apply the triangle inequality and \eqref{eq:mat-cut-relax}.

Finally, we consider a semidefinite relaxation, which we shall refer to as the \emph{Grothendieck norm}:
\[
\|A\|_\G := \sup_{x_1, \dots, x_m, y_1, \dots, y_n \in B(\HH)} \left| \sum_{s \in [m], t \in [n]} a_{st} \ang{x_s, y_t} \right|,
\]
where $(\HH, \ang{ \cdot,\cdot})$ is any Hilbert space\footnote{In general, one should take a complex Hilbert space, but since we only consider real-valued matrices $A$, it is equivalent to use real Hilbert spaces.} and $B(\HH)$ is the unit ball in $\HH$, containing all points in $\HH$ of norm at most 1. In fact, since we are working with $m+n$ vectors, one can assume that $\HH = \RR^{m+n}$ in the definition above.

A key fact about the Grothendieck norm is that it is equivalent to the $\infty \to 1$ norm. Though this result was proved by Grothendieck~\cite{G53}, it was first stated in this form by Lindenstrauss and Pe\l czy\' nski~\cite{LP68}. We refer the interested reader to the surveys~\cite{KN12, P12} for further information.

\begin{theorem}[Grothendieck's inequality]
  \label{thm:grothendieck} There exists a constant $K_\G$ such that for all real-valued matrices $A$,
  \begin{equation*} \label{eq:grothendieck-r}
	\|A\|_{\infty \to 1} \le \|A\|_\G \le K_\G \|A\|_{\infty \to 1}.
  \end{equation*}
  We use the symbol $K_\G$ (the real Grothendieck constant) to denote the optimal constant in the above inequality.
\end{theorem}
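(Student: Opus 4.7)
The lower bound $\|A\|_{\infty \to 1} \le \|A\|_\G$ is immediate, since taking $\HH = \RR$ shows that every choice of scalars $x_s, y_t \in [-1,1]$ is a valid choice of vectors in $B(\HH)$. For the upper bound, I would follow Krivine's Gaussian rounding argument in three stages. A convexity argument lets me assume that the supremum defining $\|A\|_\G$ is (nearly) attained by \emph{unit} vectors $x_s, y_t$, since $|\sum a_{st} \ang{x_s, y_t}|$ is convex in each $x_s$ and each $y_t$ separately.

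The first stage is Grothendieck's identity: for unit vectors $u, v \in \HH$ and a standard Gaussian vector $g$ in a finite-dimensional subspace containing $u$ and $v$,
\[
\EE\big[\,\mathrm{sgn}\ang{g, u}\cdot \mathrm{sgn}\ang{g, v}\,\big] = \tfrac{2}{\pi} \arcsin \ang{u, v}.
\]
This reduces to a two-dimensional computation: project $g$ onto the plane spanned by $u$ and $v$ and use the rotational invariance of the Gaussian measure on $\RR^2$.

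The identity produces $\arcsin \ang{u,v}$ rather than the linear quantity $\ang{u,v}$ appearing in $\|A\|_\G$, which motivates the second stage, Krivine's linearization. I would replace $x_s, y_t$ by new unit vectors $\widetilde x_s, \widetilde y_t$ in a larger Hilbert space with $\ang{\widetilde x_s, \widetilde y_t} = \sin(c \ang{x_s, y_t})$ for a well-chosen $c>0$. Using the Taylor expansion of $\sin$ together with $\ang{x^{\otimes m}, y^{\otimes m}} = \ang{x, y}^m$, the choice
\[
\widetilde x_s := \bigoplus_{k \ge 0} \sqrt{\tfrac{c^{2k+1}}{(2k+1)!}}\, x_s^{\otimes(2k+1)}, \qquad
\widetilde y_t := \bigoplus_{k \ge 0} (-1)^k \sqrt{\tfrac{c^{2k+1}}{(2k+1)!}}\, y_t^{\otimes(2k+1)}
\]
gives $\ang{\widetilde x_s, \widetilde y_t} = \sin(c\ang{x_s, y_t})$ and $|\widetilde x_s|^2 = |\widetilde y_t|^2 = \sinh(c)$. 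Taking $c := \sinh^{-1}(1) = \ln(1+\sqrt{2})$ makes both families unit vectors.

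In the third stage, applying Grothendieck's identity to $\widetilde x_s, \widetilde y_t$ produces random signs $\xi_s, \eta_t \in \{\pm 1\}$ with $\EE[\xi_s \eta_t] = \tfrac{2}{\pi}\arcsin\sin(c\ang{x_s,y_t}) = \tfrac{2c}{\pi}\ang{x_s, y_t}$, using that $c < \pi/2$. Taking expectations in $|\sum_{s,t} a_{st} \xi_s \eta_t| \le \|A\|_{\infty\to 1}$ then yields $\|A\|_\G \le \tfrac{\pi}{2\ln(1+\sqrt{2})}\|A\|_{\infty \to 1}$, giving the Krivine bound on $K_\G$. The main obstacle is the second stage: the alternating signs in the Taylor series of $\sin$ must be absorbed into one of the two tensor decompositions while keeping both families unit-normed, and it is precisely the identity $\sinh(\sinh^{-1} 1) = 1$ that makes this balance work.
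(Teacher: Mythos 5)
The paper does not prove Theorem~\ref{thm:grothendieck} at all: it is quoted as a classical result, with references to Grothendieck's original r\'esum\'e, Lindenstrauss--Pe\l czy\'nski, and the surveys of Khot--Naor and Pisier. There is therefore no argument in the paper to compare against.

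Your proposal is, however, a correct and essentially complete sketch of Krivine's proof, which is one of the standard routes. The lower bound via $\HH = \RR$ is right. For the upper bound, all three stages check out: the convexity reduction to unit vectors (the objective is the absolute value of a function affine in each $x_s$ and each $y_t$ separately, and the extreme points of the unit ball of a Hilbert space are the unit vectors); the sign--arcsine identity $\EE\bigl[\mathrm{sgn}\ang{g,u}\,\mathrm{sgn}\ang{g,v}\bigr] = \tfrac{2}{\pi}\arcsin\ang{u,v}$ via projection to the plane spanned by $u,v$ and rotational invariance; and Krivine's tensor linearization, where your explicit direct-sum vectors give $\ang{\widetilde x_s, \widetilde y_t} = \sin(c\ang{x_s,y_t})$ and squared norm $\sinh(c)$, so that $c = \sinh^{-1}(1) = \ln(1+\sqrt 2)$ normalizes both families. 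Since $c < \pi/2$, the composition $\arcsin\circ\sin$ unwinds to the identity on $[-c,c]$, giving $\EE[\xi_s\eta_t] = \tfrac{2c}{\pi}\ang{x_s,y_t}$, and averaging the bound $\bigl|\sum_{s,t} a_{st}\xi_s\eta_t\bigr| \le \|A\|_{\infty\to 1}$ yields $\|A\|_\G \le \tfrac{\pi}{2\ln(1+\sqrt 2)}\|A\|_{\infty\to 1}$. This is Krivine's constant, which the paper notes is only marginally above the current record of Braverman--Makarychev--Makarychev--Naor, and it certainly suffices for the statement of Theorem~\ref{thm:grothendieck}. The one cosmetic point: ``the identity $\sinh(\sinh^{-1}1)=1$'' is a tautology; what you mean is that the constraint $\sinh(c)=1$ pins down $c$, and the lucky fact is that this value is still below $\pi/2$ so that stage three works.
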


The best current upper bound on the real Grothendieck constant is $K_\G < \frac{\pi}{2\log(1+\sqrt{2})} = 1.78\dots$, due to Braverman, Makarychev, Makarychev, and Naor~\cite{BMMN13} (see Haagerup~\cite{H87} for the best constant in the complex case). Combining \eqref{eq:cut-infty-1-r} with Grothendieck's inequality (and using $4K_\G \le 8$), we have
\begin{equation} \label{eq:mat-cut-G-r}
\|A\|_\cut \le \|A\|_\G \le 8 \|A\|_\cut,
\end{equation}
that is, the cut norm and the Grothendieck norm are equivalent.

\medskip

The spectral norm provides an upper bound on the Grothendieck norm: for any $A \in \RR^{m \x n}$,
\begin{equation} \label{eq:mat-G-spec}
\|A\|_\G \le \sqrt{mn} \|A\|.
\end{equation}
The proof is by a straightforward application of the Cauchy--Schwarz inequality.\footnote{We may assume that $\HH = \RR^k$ for some $k \le m+n$ and we have $x_1, \dots, x_m, y_1, \dots, y_n \in B(\HH)$ such that $\|A\|_\G = \bigl| \sum_{s \in [m], t \in [n]} a_{st} \ang{x_s, y_t} \bigr|
\le \sum_{j=1}^k  \bigl| \sum_{s \in [m], t \in [n]} a_{st} x_{sj} y_{tj} \bigr|
\le \sum_{j=1}^k \|A\| \sqrt{\sum_{s \in [m]} |x_{sj}|^2} \sqrt{ \sum_{t \in [n]} |y_{tj}|^2}
\le \|A\| \sqrt{\sum_{s \in [m]} |x_{s}|^2} \sqrt{ \sum_{t \in [n]} |y_{t}|^2}
\le \sqrt{mn} \|A\|$
by the triangle inequality, the definition of the spectral norm, and the Cauchy--Schwarz inequality.}

In general, the spectral norm and the Grothendieck norm are inequivalent, i.e., $\sqrt{mn}\|A\|/\|A\|_\G$ can be arbitrarily large, as shown by Example~\ref{nonequiv}. Our main theorem below shows, somewhat surprisingly, that the two norms agree for matrices associated to vertex-transitive graphs.

From now on we consider only square matrices. Let $A = (a_{s,t})_{1 \le s,t \le n}$. We say that a permutation $\sigma$ of $[n]$ is an \emph{automorphism} if $A_{\sigma(s),\sigma(t)} = A_{s,t}$ for all $s,t \in [n]$. We call a subgroup $G$ of the permutations of $[n]$ \emph{transitive} if for every $s,t \in [n]$ there is some $\sigma \in G$ such that $\sigma(s) = t$. We say that $A$ is \emph{vertex-transitive} if its group of automorphisms is transitive on $[n]$.

\begin{theorem} \label{thm:mat-G-spec}
$\|A\|_\G = n \|A\|$ for all $n \x n$ vertex-transitive real-valued matrices $A$.
\end{theorem}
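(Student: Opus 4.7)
The inequality $\|A\|_\G \le n\|A\|$ is immediate from \eqref{eq:mat-G-spec} with $m = n$, so all the work is in the matching lower bound $\|A\|_\G \ge n\|A\|$. My plan is to use a transitive group $G$ of automorphisms of $A$ to symmetrize an optimal pair of test vectors for the spectral norm into a collection of Hilbert-space vectors that witness the Grothendieck norm. Taking advantage of the freedom to use vectors in an ambient Hilbert space of large dimension is essential: the whole gain over \eqref{eq:mat-G-spec} comes from spreading the symmetrization across orthogonal directions indexed by $G$.

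Concretely, pick real unit vectors $u, v \in \RR^n$ realizing the spectral norm, i.e.\ $u^* A v = \|A\|$ (flipping a sign if necessary). Working in $\HH = \ell^2(G)$, I define, for each $s, t \in [n]$,
\[
x_s = \sqrt{n/|G|}\,(u_{\sigma(s)})_{\sigma \in G}, \qquad y_t = \sqrt{n/|G|}\,(v_{\sigma(t)})_{\sigma \in G}.
\]
Two things must then be checked: that each $x_s$ and each $y_t$ lies in the unit ball of $\HH$, and that $\sum_{s, t} a_{st}\ang{x_s, y_t} = n\|A\|$. For the first, transitivity via orbit-stabilizer says that for any fixed $s$ the map $\sigma \mapsto \sigma(s)$ is $(|G|/n)$-to-one onto $[n]$, so
\[
|x_s|^2 = \tfrac{n}{|G|}\sum_{\sigma \in G} u_{\sigma(s)}^2 = \tfrac{n}{|G|}\cdot\tfrac{|G|}{n}\sum_{r \in [n]} u_r^2 = 1,
\]
and identically $|y_t| = 1$. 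For the second, interchanging sums gives
\[
\sum_{s, t} a_{st}\ang{x_s, y_t} = \tfrac{n}{|G|}\sum_{\sigma \in G}\sum_{s, t} a_{st} u_{\sigma(s)} v_{\sigma(t)},
\]
and after the substitution $s' = \sigma(s)$, $t' = \sigma(t)$ the automorphism identity $a_{s't'} = a_{\sigma^{-1}(s'),\sigma^{-1}(t')}$ collapses each inner double sum to $u^* A v = \|A\|$, yielding the value $n\|A\|$ on the nose.

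Plugging this into the definition of $\|\cdot\|_\G$ finishes the argument. The proof is essentially a one-line averaging once the vectors are written down, so I do not anticipate a serious obstacle. The only subtle point is finding the right Hilbert space and normalization so that vertex-transitivity makes the norms $|x_s|$ simultaneously equal to $1$ without sacrificing the value of the bilinear expression; taking $\HH = \ell^2(G)$ with scale $\sqrt{n/|G|}$ is precisely what balances these two constraints.
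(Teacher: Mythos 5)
Your proof is correct, and the central idea is the same as the paper's: exploit a transitive group $G$ of automorphisms to symmetrize an optimal pair of spectral-norm test vectors into a family of $\ell^2(G)$-valued vectors, chosen so that the orbit-counting makes each have norm exactly one while the averaged bilinear form still recovers $u^* A v$. Where you differ is in the packaging. The paper first lifts $A$ to a function $f(g) = a_{g(1),1}$ on $G$ and proves $n\|f\| = \|A\|$ and $n^2\|f\|_\G = \|A\|_\G$ (Lemma~\ref{lem:uplift}), then separately proves $\|f\| = \|f\|_\G$ for any $f \colon G \to \RR$ (Theorem~\ref{thm:gp-spec-G}) via the shift trick $x_g(h) = x(gh)$. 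Your argument is effectively the composition of these two proofs carried out directly on $A$ in a single step, skipping the intermediate Cayley-graph function. This is slightly more direct. What the paper's factorization buys is an independently useful statement (Theorem~\ref{thm:gp-spec-G}, which is Theorem~\ref{thm:cayley} once combined with Grothendieck's inequality) and modularity: the lifting lemma and the group-function identity are reused almost verbatim for the bipartite variants in Section~5, whereas the one-shot argument would need to be rewritten there. One minor polish: you should note explicitly that the automorphism identity $a_{\sigma(s),\sigma(t)} = a_{s,t}$, applied with $\sigma^{-1} \in G$, is what gives $a_{\sigma^{-1}(s'),\sigma^{-1}(t')} = a_{s',t'}$, and that the sign of $u^*Av$ can be taken nonnegative so the absolute value in $\|A\|_\G$ is harmless.
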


Using \eqref{eq:mat-cut-G-r}, we obtain the following corollary relating the cut norm and the spectral norm, from which Corollary~\ref{transitive} clearly follows.

\begin{corollary} \label{cor:mat-spec-cut}
$\|A\|_\cut \le n\|A\| \le 8 \|A\|_\cut$  for all $n \x n$ vertex-transitive real-valued matrices $A$.
\end{corollary}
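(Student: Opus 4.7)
The plan is to obtain this corollary as an immediate combination of the two facts already assembled in this section. The first ingredient is Theorem~\ref{thm:mat-G-spec}, which asserts the equality $\|A\|_\G = n\|A\|$ for every $n \times n$ vertex-transitive real matrix $A$. The second ingredient is the two-sided bound \eqref{eq:mat-cut-G-r}, namely $\|A\|_\cut \le \|A\|_\G \le 8\|A\|_\cut$, which holds for \emph{every} real matrix $A$ (vertex-transitive or not) and was derived from the sandwich $\|A\|_\cut \le \|A\|_{\infty \to 1} \le 4\|A\|_\cut$ in \eqref{eq:cut-infty-1-r} combined with Grothendieck's inequality (Theorem~\ref{thm:grothendieck}) and the numerical bound $4K_\G \le 8$.

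Given these, my plan is to do essentially a one-line substitution: if $A$ is an $n \times n$ vertex-transitive real matrix, I would insert $\|A\|_\G = n\|A\|$ from Theorem~\ref{thm:mat-G-spec} into the chain $\|A\|_\cut \le \|A\|_\G \le 8\|A\|_\cut$, which immediately yields $\|A\|_\cut \le n\|A\| \le 8\|A\|_\cut$, the statement of the corollary. No additional use of vertex-transitivity is required beyond the invocation of Theorem~\ref{thm:mat-G-spec}.

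There is no real obstacle here, since all the conceptual content has already been placed in Theorem~\ref{thm:mat-G-spec}. The only point worth flagging is that without vertex-transitivity, the crude bound $\|A\|_\G \le \sqrt{mn}\|A\|$ from \eqref{eq:mat-G-spec} (together with Example~\ref{nonequiv}) shows that the inequality $n\|A\| \le 8\|A\|_\cut$ can fail badly; so the corollary genuinely relies on the Grothendieck-norm/spectral-norm coincidence for transitive matrices, while the reverse inequality $\|A\|_\cut \le n\|A\|$ and the factor $8$ are purely formal consequences of Grothendieck's inequality.
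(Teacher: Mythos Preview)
Your proposal is correct and matches the paper's own derivation exactly: the paper simply states that the corollary follows by combining Theorem~\ref{thm:mat-G-spec} with the chain \eqref{eq:mat-cut-G-r}. There is nothing further to add.
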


As discussed in the introduction, the proof will proceed through an analysis of Cayley graphs, which form the subject of the next section.

\section{Cayley graphs} \label{sec:cayley}

We consider a weighted version of Cayley graphs. Let $f \colon G \to \RR$. Define a weighted directed graph $\Gamma = \Cay(G,f)$ with vertex set $G$ and edge weights $\Gamma \colon G \x G \to \RR$ given by $\Gamma(g,h) := f(gh^{-1})$ for all $g,h \in G$. Let $A(f)$ denote the associated matrix, whose rows and columns are indexed by $G$, with $a_{g,h} = \Gamma(g,h) = f(gh^{-1})$ for all $g,h \in G$. The natural analogue of the assumption that $S$ is symmetric (giving undirected Cayley graphs) would be $f(g) = f(g^{-1})$, implying that $A(f)$ is symmetric, though we will not need to make such an assumption here.

We define various norms for functions $f \colon G \to \RR$ corresponding to the matrix norms defined in Section~\ref{sec:relax}. We use the averaging measure on $G$. Define the $p$-norms for functions on $G$ by
\[
\|f\|_p = \left(\EE_{g \in G} |f(g)|^p\right)^{1/p}
\]
and $\|f\|_\infty = \sup_{g \in G} |f(g)|$. Define the convolution $f_1 * f_2$ of two functions $f_1, f_2: G \rightarrow \mathbb{R}$ by 
\[(f_1 * f_2)(g) = \EE_{h \in G} f_1(gh^{-1}) f_2(h).\]
The spectral norm is defined as follows, agreeing with the matrix version (up to normalization):
\begin{equation} \label{eq:gp-spec}
\|f\| = \sup_{\substack{x \colon G \to \RR \\ \|x\|_2 \le 1}} \|f * x \|_2 
= \sup_{\substack{x,y \colon G \to \RR \\ \|x\|_2,\|y\|_2 \le 1}} \left|\EE_{g,h\in G} f(gh^{-1}) x(g) y(h) \right|
= |G|^{-1} \| A(f)\|.
\end{equation}
The Grothendieck norm is defined by
\[
\|f\|_\G := \sup_{x,y \colon G \to B(\HH)} \left| \EE_{g,h\in G} f(gh^{-1}) \ang{x(g),y(h)} \right| = |G|^{-2} \|A(f)\|_\G.
\]
Our main result now is that the spectral norm agrees with the Grothendieck norm for functions on a group.

\begin{theorem} \label{thm:gp-spec-G}
$\|f\| = \|f\|_\G$ for every function $f \colon G \to \RR$ on a finite group $G$.
\end{theorem}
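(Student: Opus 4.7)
The plan is to prove the two inequalities $\|f\|_\G \le \|f\|$ and $\|f\| \le \|f\|_\G$ separately. The upper bound is essentially free: applying the general matrix inequality \eqref{eq:mat-G-spec} to the square matrix $A(f)$ gives $\|A(f)\|_\G \le |G|\,\|A(f)\|$, and after dividing by the normalization factors $|G|^2$ and $|G|$ appearing in the definitions relating $\|\cdot\|_\G$ and $\|\cdot\|$ on functions to their matrix counterparts, this becomes $\|f\|_\G \le \|f\|$.

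The substantive direction is $\|f\| \le \|f\|_\G$, and the idea is to use the left regular representation of $G$ to lift a scalar spectral-norm certificate into a vector-valued Grothendieck-norm certificate. Since $G$ is finite, the supremum in the definition of $\|f\|$ is attained: fix $x,y \colon G \to \RR$ with $\|x\|_2 = \|y\|_2 = 1$ and $\EE_{g,h} f(gh^{-1}) x(g) y(h) = \|f\|$. Take $\HH = L^2(G)$ with the averaging inner product, and define $U,V \colon G \to \HH$ by
\[
U(g)(k) := x(gk), \qquad V(h)(k) := y(hk).
\]
Translation invariance of the averaging measure yields $|U(g)|_\HH^2 = \EE_k x(gk)^2 = \|x\|_2^2 = 1$, and likewise $|V(h)|_\HH = 1$, so $U(g), V(h) \in B(\HH)$, as required for the Grothendieck norm. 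One then computes
\[
\EE_{g,h} f(gh^{-1}) \ang{U(g), V(h)} = \EE_{g,h,k} f(gh^{-1}) x(gk) y(hk)
\]
and substitutes $a := gk$, $b := hk$. The crucial identity $gh^{-1} = (gk)(hk)^{-1} = ab^{-1}$ makes the integrand independent of $k$; since for each fixed $k$ the map $(g,h)\mapsto(a,b)$ is a bijection of $G \x G$, the triple average collapses to $\EE_{a,b} f(ab^{-1}) x(a) y(b) = \|f\|$, witnessing $\|f\|_\G \ge \|f\|$.

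The argument is not really obstructed by any hard step; the only delicate point is aligning the sides of multiplication in $U$ and $V$ with the Cayley kernel $f(gh^{-1})$. Putting the translations on the correct side (the diagonal left action of $G$ on pairs) forces $\ang{U(g),V(h)}$ to depend on $g, h$ only through $gh^{-1}$ after the $k$-average, which is precisely what is needed to reconstruct the original bilinear form. In effect, the Cayley structure transports the pointwise norm constraint $|U(g)|, |V(h)|\le 1$ demanded by $\|f\|_\G$ and the averaged constraint $\|x\|_2, \|y\|_2\le 1$ appearing in $\|f\|$ onto the same footing, which is exactly the gap between the spectral norm and the Grothendieck norm in the general matrix setting.
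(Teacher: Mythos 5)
Your proof is correct and is essentially identical to the paper's (non-appendix) proof: you define the same lift $U(g)(k) = x(gk)$, $V(h)(k) = y(hk)$ into $L^2(G)$ that the paper calls $x_g, y_h$, and you collapse the triple average using the same diagonal-translation invariance $gh^{-1} = (gk)(hk)^{-1}$. The upper bound via \eqref{eq:mat-G-spec} is also handled the same way.
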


We thank Assaf Naor and Prasad Raghavendra for independently suggesting the following short proof of Theorem~\ref{thm:gp-spec-G} after Y.~Z.\ presented an initial version of this work at the Simons Symposium on Analysis of Boolean Functions in April 2016. We include their proof here with permission. Our original proof using representation theory can be found in the appendix.

\medskip
\begin{proof}
Let $f \colon G \to \RR$. Let $x,y \colon G \to \RR$ with $\|x\|_2 \le 1$ and $\|y\|_2 \le 1$ be such that 
\[
\|f\| = |\EE_{g,h \in G} f(gh^{-1}) x(g) y(h)|.
\]
Define $x_g(h) = x(gh)$ and $y_g(h) = y(gh)$ for all $g,h \in G$. We view $x_g$ and $y_h$ as vectors in the unit ball in $L^2(G)$ equipped with inner product $\ang{x,y} = \EE_{g \in G} x(g)y(g)$ for all $x,y \in L^2(G)$. We have
\begin{align*}
\|f\|_\G 
&\ge |\EE_{g,h \in G} f(gh^{-1})\ang{x_g,y_h}|
= |\EE_{g,h,a \in G} f(gh^{-1}) x(ga)y(ha)|
\\&= |\EE_{g,h,a \in G} f((ga)(ha)^{-1}) x(ga)y(ha)|
= |\EE_{g,h \in G} f(gh^{-1}) x(g)y(h)|
= \|f\|.
\end{align*}
Combining with $\|f\|_\G \le \|f\|$ from \eqref{eq:mat-G-spec}, we obtain $\|f\|_\G = \|f\|$.
\end{proof}

\section{Transitive graphs}

Now we show that Theorem~\ref{thm:gp-spec-G} on functions $f \colon G \to \RR$ implies Theorem~\ref{thm:mat-G-spec} on vertex-transitive matrices. The idea is that we can lift the edges of a transitive graph to a Cayley graph over its automorphism group and this operation preserves all the norms that we are interested in.

Recall that $A = (a_{s,t})_{s,t \in [n]}$ is an $n \times n$ vertex-transitive matrix if the group of automorphisms of $A$ is transitive on $[n]$. Let $G$ be any transitive subgroup of automorphisms (it could be the whole automorphism group or any transitive subgroup). Here every element $g \in G \le S_n$ is some permutation of $[n]$, i.e., $g \colon [n] \to [n]$, and the product of two elements $g,h \in S_n$ is defined by $(gh)(s) = h(g(s))$ for all $s \in [n]$. We have $a_{g(s),g(t)} = a_{s,t}$ for all $s,t \in [n]$ and $g \in G$.

\begin{lemma} \label{lem:uplift}
  Let $A = (a_{s,t})_{s,t \in [n]}$ be an $n \times n$ vertex-transitive real-valued matrix and let $G$ be any transitive subgroup of automorphisms of $A$. Define $f \colon G \to \RR$ by $f(g) = a_{g(1), 1}$. Then
\[
n \|f\| = \|A\| \quad
\text{and} \quad
n^2 \|f\|_\G = \|A\|_\G.
\]
\end{lemma}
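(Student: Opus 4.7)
The plan is to exploit the natural surjection $\pi \colon G \to [n]$ given by $\pi(g) = g(1)$, whose fibers all have size $|G|/n$ by transitivity. The key algebraic identity is
\[
a_{g(1),h(1)} = f(gh^{-1}) \qquad \text{for all } g,h \in G,
\]
obtained by applying the automorphism $h^{-1}$ to the index pair $(g(1),h(1))$ and comparing with the definition $f(g) = a_{g(1),1}$. Equivalently, if $P$ denotes the $|G| \times n$ incidence matrix of $\pi$, then $A(f) = P A P^T$ and $P^T P = (|G|/n)\, I_n$. Each of the asserted equalities will come from a two-sided comparison of the bilinear forms defining the norms on the two sides of this factorization.

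In the ``lifting'' direction, I would take near-extremizers $x, y$ for $\|A\|$ (respectively for $\|A\|_\G$) and pull them back to $G$ by $\tilde x(g) := x(g(1))$, $\tilde y(h) := y(h(1))$ (respectively $\tilde x_g := x_{g(1)}$ in the Hilbert-space case). The key identity together with the uniform fiber count then gives
\[
\EE_{g,h \in G} f(gh^{-1})\, \tilde x(g)\, \tilde y(h) \;=\; \frac{1}{n^2}\, x^T A y
\]
and the analogous expression for the Grothendieck pairing, while $\|\tilde x\|_2 = |x|/\sqrt n$. Tracking these constants yields the lower bounds $\|f\| \ge \|A\|/n$ and $\|f\|_\G \ge \|A\|_\G/n^2$.

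In the ``projection'' direction, given test data $\tilde x, \tilde y$ on $G$, I would collapse them along the fibers of $\pi$ by setting $X_s := \sum_{g : g(1) = s} \tilde x(g)$ and analogously $Y_t$, so that the bilinear form on $G$ becomes
\[
\frac{1}{|G|^2} \sum_{s,t \in [n]} a_{s,t} \ang{X_s, Y_t}
\]
(with $\ang{\cdot,\cdot}$ replaced by ordinary multiplication in the scalar case). The one place where the two norms genuinely diverge is the control on the $X_s$: for the spectral norm, Cauchy--Schwarz inside each fiber of size $|G|/n$ yields $|X| \le |G|/\sqrt n$; for the Grothendieck norm, the triangle inequality in $\HH$ yields the stronger pointwise bound $\|X_s\|_\HH \le |G|/n$, so that each $(n/|G|) X_s$ lies in $B(\HH)$. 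Substituting each bound into the corresponding definition of $\|A\|$ or $\|A\|_\G$ produces the matching upper bounds $\|f\| \le \|A\|/n$ and $\|f\|_\G \le \|A\|_\G/n^2$.

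No step is truly an obstacle; the only thing to watch carefully is that the factor $|G|/n$ enters quadratically in one norm and linearly in the other, which is precisely why the spectral version of the lemma carries the factor $n$ while the Grothendieck version carries $n^2$.
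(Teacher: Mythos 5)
Your proof is correct and follows essentially the same lift-and-project argument as the paper: pull vectors back along $g \mapsto g(1)$ to get one inequality, average/sum over fibers to get the other, with the key identity $a_{g(1),h(1)} = f(gh^{-1})$ doing the work in both directions. The factorization $A(f) = PAP^T$, $P^TP = (|G|/n)I_n$ is a nice conceptual packaging, and your explicit separation of the $L^2$ (Cauchy--Schwarz) versus $L^\infty$ (triangle-inequality) fiber bounds spells out what the paper compresses into a single appeal to convexity.
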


\begin{proof}
The proofs of these two identities are essentially the same. Let $\HH$ denote either an arbitrary Hilbert space (for the Grothendieck norm) or simply $\RR$ (for the spectral norm). For any $x_1, \dots, x_n, y_1, \dots, y_n \in \HH$, define $x,y \colon G \to \HH$ by $x(g) = x_{g(1)}$ and $y(g) = y_{g(1)}$ for all $g \in G$. Then
\begin{multline}
  \label{eq:trans-lift-form}
  \EE_{g,h \in G} [f(gh^{-1}) \langle x(g), y(h) \rangle]
= \EE_{g,h \in G} [a_{(gh^{-1})(1), 1} \langle x(g), y(h) \rangle]\\
= \EE_{g,h \in G} [a_{g(1), h(1)} \langle x_{g(1)}, y_{h(1)} \rangle]
= \frac{1}{n^2} \sum_{s,t \in [n]} a_{s,t} \langle x_s, y_t \rangle.
\end{multline}
Furthermore, we have $n^{-1} \sum_{s \in [n]} |x_s|^2 = \EE_{g \in G} |x(g)|^2$, and similarly with $y$. This shows $n\|f\| \ge \|A\|$ and $n^2\|f\|_\G \ge \|A\|_\G$.

Conversely, given $x,y \colon G \to \HH$, we can set $x_1, \dots, x_n, y_1, \dots, y_n \in \HH$ to be $x_s = \EE_{g \in G : g(1) = s} x(g)$ and $y_s = \EE_{g \in G : g(1) = s} y(g)$ for all $s \in [n]$. Then \eqref{eq:trans-lift-form} still holds. We also have $n^{-1} \sum_{s \in [n]} |x_s|^2 \le \EE_{g \in G} |x(g)|^2$ by convexity, and similarly with $y$. This shows that $n\|f\| \le \|A\|$ and $n^2\|f\|_\G \le \|A\|_\G$.
\end{proof}

\section{Bipartite analogues}

There are also variants of our results in the bipartite setting. Given a group $G$ of order $n$ and a function $f \colon G \rightarrow \RR$, the \emph{bipartite Cayley graph} $\Gamma = \Bip(G, f)$ is the bipartite graph between two copies of $G$ with edge weights $f \colon G \x G \to \RR$ given by $\Gamma(g, h) = f(gh^{-1})$. We write $B(f)$ for the associated matrix, whose rows and columns are both indexed by $G$, with $b_{g,h} = \Gamma(g,h) = f(g h^{-1})$. Since we did not require that $f$ be symmetric in Theorem~\ref{thm:gp-spec-G}, we can apply it directly to prove that $\|B(f)\|_\G = n \|B(f)\|$ for any $f \colon G \rightarrow \RR$. An analogue of Theorem~\ref{thm:cayley} then follows provided we replace the second largest eigenvalue with the largest singular value of $B(\Gamma) - \frac{d}{n} J$, where $J$ is the all-1's matrix.

We also have an analogue of Theorem~\ref{transitive}. We say that a bipartite graph $\Gamma$ between sets $U$ and $V$ of orders $m$ and $n$, respectively, is {\it bitransitive} if the group of automorphisms of $\Gamma$ is transitive on each part. For example, the graph between any two consecutive layers of the Boolean cube satisfies this condition. A slight amendment to the proof of Lemma~\ref{lem:uplift} then allows us to show that if $B$ is an $m \times n$ bitransitive real-valued matrix, then $\|B\|_\G = \sqrt{mn} \|B\|$. The claimed analogue of Theorem~\ref{transitive} then follows easily, with the second largest eigenvalue now replaced by the largest singular value of $B(\Gamma) - p J$, where $p$ is the density of $\Gamma$.

\medskip
\noindent\emph{Acknowledgments.} We would like to thank Sean Eberhard for helpful discussions. We would also like to thank Noga Alon and Mathias Schacht for bringing the paper~\cite{ACHKRS10} to our attention. We are greatly indebted to Assaf Naor and Prasad Raghavendra for their contribution and Y.~Z.\ would like to thank the Simons Foundation and the organisers of the Simons Symposium on Analysis of Boolean Functions for facilitating these conversations. Finally, we thank the referees for suggesting changes that greatly improved the presentation of the paper.

\appendix

\section{Representation-theoretic proof}

We include in this appendix our original proof of Theorem~\ref{thm:gp-spec-G}, that $\|f\| = \|f\|_\G$ for $f \colon G \to \RR$, using representation theory. The proof follows the spirit of Gowers' work on quasirandom groups~\cite{G08} and is a  natural generalization of the proof for the abelian case \cite{KRS16}.

\subsection{Abelian groups}
As a warm up, we give a short proof in the abelian setting, which is significantly easier. This proof is instructive, though not strictly required for the general setting.

Here and throughout the appendix, we will work with functions $f \colon G \to \CC$ and with complex analogues of our various norms. For the most part, the definitions of these norms are the same as in the real case or the same after some minor modification. For example, letting $\DD := \{z \in \CC : |z| \le 1\}$ denote the unit disk in the complex plane, the complex $\infty \to 1$ norm for functions $f \colon G \to \CC$ is defined by
\[
\|f\|^\CC_{\infty \to 1} := \sup_{x,y \colon G \to \DD} \left| \EE_{g,h \in G} f(gh^{-1}) \ol{x(g)} y(h) \right|.
\]
The essence of the proof below can be found in Kohayakawa--R\"odl--Schacht~\cite{KRS16} (where the proof is attributed to Gowers). Here we give a more streamlined presentation.

\begin{theorem} \label{thm:abelian}  $\|f\| = \|f\|_{\infty \to 1}^\CC$ for every function $f \colon G \to \CC$ on a finite abelian group $G$.
\end{theorem}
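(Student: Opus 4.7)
The plan is to prove the equality via two matching inequalities. The bound $\|f\|_{\infty\to 1}^\CC \le \|f\|$ is essentially automatic under the averaging convention: for any $x,y\colon G \to \DD$ we have $\|x\|_2^2 = \EE_g|x(g)|^2 \le 1$ and likewise for $y$, so the supremum defining $\|f\|_{\infty\to 1}^\CC$ is taken over a subset of the pairs $(x,y)$ appearing in the Cauchy--Schwarz formula~\eqref{eq:gp-spec} for $\|f\|$ (transcribed to the complex setting by placing a conjugate on $x$).

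For the reverse direction, I would exploit Fourier analysis on the finite abelian group $G$. Writing $\widehat{G}$ for the character group and $\hat f(\chi) := \EE_g f(g)\,\ol{\chi(g)}$, a direct computation using the substitution $k = gh^{-1}$ and the homomorphism property $\chi(kh) = \chi(k)\chi(h)$ gives
\[
(A(f)\chi)(g) = \sum_{h\in G} f(gh^{-1})\chi(h) = |G|\,\hat f(\chi)\,\chi(g),
\]
so each character is an eigenvector of $A(f)$ with eigenvalue $|G|\,\hat f(\chi)$. Since the $|G|$ characters form an orthonormal basis of $L^2(G)$ under the averaged inner product, this simultaneously diagonalizes $A(f)$ and yields
\[
\|f\| \;=\; |G|^{-1}\|A(f)\| \;=\; \max_{\chi\in\widehat{G}} |\hat f(\chi)|.
\]

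To finish, let $\chi^\star\in\widehat{G}$ attain this maximum. The crucial point is that $|\chi^\star(g)| = 1$ for every $g$, so $\chi^\star \in \DD^G$ is an admissible test function in the definition of $\|f\|_{\infty\to 1}^\CC$. Plugging $x = y = \chi^\star$ and again substituting $k = gh^{-1}$ gives
\[
\EE_{g,h\in G} f(gh^{-1})\,\ol{\chi^\star(g)}\,\chi^\star(h) \;=\; \EE_{k\in G} f(k)\,\ol{\chi^\star(k)} \;=\; \hat f(\chi^\star),
\]
so $\|f\|_{\infty\to 1}^\CC \ge |\hat f(\chi^\star)| = \|f\|$, completing the proof.

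The argument has no real obstacle beyond Fourier diagonalization; everything else is bookkeeping with the normalization conventions. The essential feature, which it is instructive to isolate before turning to the main theorem, is that on an abelian group the spectral norm is already attained on the much smaller set of unit-modulus test functions, namely the characters -- a phenomenon whose generalization to the non-abelian setting of Theorem~\ref{thm:gp-spec-G} will require a more substantial input (either Grothendieck's inequality, as in the main proof, or representation theory, as in the rest of the appendix).
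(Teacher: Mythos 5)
Your proof is correct and takes essentially the same route as the paper's: both directions match, and the key step — using the Fourier diagonalization of $A(f)$ by characters to conclude $\|f\| = \max_{\chi}|\hat f(\chi)|$ and then feeding the extremal unimodular character into the $\infty\to 1$ supremum — is exactly what the paper does, though it cites the diagonalization as ``well known'' while you spell it out.
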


\begin{proof}
It is easy to see that $\|f\|_{\infty \to 1}^\CC \le \|f\|$ from the definitions, as $\|x\|_2 \le 1$ for all $x \colon G \to \DD$.
It is well known that $\|f\| = |\EE_{g \in G} f(g)\ol{\chi(g)}|$ for some multiplicative character $\chi \colon G \to \CC$. On the other hand, since $\|\chi\|_\infty = 1$, we have 
\[
\|f\|_{\infty \to 1}^\CC \ge |\EE_{g,h \in G} f(gh^{-1})\ol{\chi(g)}\chi(h)| = |\EE_{g \in G} f(g)\ol{\chi(g)}| = \|f\|.
\]
Therefore, $\|f\| = \|f\|_{\infty \to 1}^\CC$.
\end{proof}

Similarly to \eqref{eq:cut-infty-1-r}, it can be shown that the cut norm is equivalent to the complex $\infty \to 1$ norm for complex-valued matrices. Together with Theorem~\ref{thm:abelian}, this implies that the cut norm and the spectral norm are equivalent for abelian Cayley graphs.

\begin{remark}
For non-abelian groups $G$, it is possible to have $\|f\| > \|f\|_{\infty \to 1}^\CC$.
If an undirected weighted Cayley graph on $G$ admits a coordinate-wise uniformly bounded eigenbasis, then one has $\|f\| = O(\|f\|_{\infty \to 1}^\CC)$ by the above argument.\footnote{We thank Assaf Naor for this remark.} However, as will be addressed in future work, not all Cayley graphs admit such eigenbases. Consequently, the above proof does not extend immediately to the non-abelian case. The proof given below instead generalizes the above proof by replacing the multiplicative characters with group representations.
\end{remark}

In the next three subsections, we recall some standard mathematical tools used in our proof. The reader familiar with these tools should feel free to skip ahead to Section~\ref{sec:proof-main}.

\subsection{Group representation theory} We recall some basic facts from group representation theory. They can be found in any standard textbook on the subject, e.g.,~\cite{S77}.

Let $G$ be a finite group, not necessarily abelian. We write $\wh G$ for the set of distinct irreducible representations of $G$. An irreducible representation $\rho \in \wh G$ of dimension $d_\rho$ is a homomorphism $\rho \colon G \to U(d_\rho)$, where $U(n)$ is the group of unitary $n \x n$ matrices (so that $\rho(g^{-1}) = \rho(g)^*$). Note that by a standard averaging trick, we need to only consider unitary representations.

We will need the following orthogonality result known as Schur's lemma.

\begin{theorem}[Schur's lemma] \label{thm:schur}  Let $G$ be a finite group and let $\rho, \sigma \in \wh G$. Let $M$ be any $d_\rho \x d_\sigma$ matrix with complex entries. Then
\[
\EE_{g \in G} [\rho(g) M \sigma(g^{-1})] =
\begin{cases}
\frac{\Tr M}{d_\rho}  I & \text{if } \rho = \sigma, \\
0 & \text{otherwise}.
\end{cases}
\]
\end{theorem}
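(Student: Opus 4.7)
The plan is to prove this averaged version of Schur's lemma by first observing that the matrix $N := \EE_{g \in G}[\rho(g) M \sigma(g^{-1})]$ is an intertwiner between $\rho$ and $\sigma$, then invoking the classical form of Schur's lemma (every intertwiner between inequivalent irreducibles vanishes, and every self-intertwiner of an irreducible is a scalar multiple of the identity), and finally computing the scalar by taking the trace.

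First I would check the intertwining property. For any $h \in G$, substituting $g' = hg$ and using the homomorphism property together with $\sigma((h^{-1}g')^{-1}) = \sigma(g'^{-1})\sigma(h)$ gives
\[
\rho(h) N = \EE_{g \in G}[\rho(hg) M \sigma(g^{-1})] = \EE_{g' \in G}[\rho(g') M \sigma(g'^{-1})]\sigma(h) = N \sigma(h).
\]
Thus $N$ intertwines the irreducible representations $\rho$ and $\sigma$.

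Next I would invoke the classical Schur's lemma: if $\rho \not\cong \sigma$, then any intertwiner is zero, so $N = 0$, handling the off-diagonal case. If $\rho = \sigma$, then $N$ commutes with an irreducible representation and hence, working over $\CC$, must be of the form $\lambda I$ for some scalar $\lambda \in \CC$. To evaluate $\lambda$, I would take the trace and use the cyclic invariance property together with $\rho(g^{-1}) = \rho(g)^{-1}$:
\[
d_\rho \lambda = \Tr(N) = \EE_{g \in G} \Tr(\rho(g) M \rho(g^{-1})) = \EE_{g \in G} \Tr(M) = \Tr(M),
\]
so $\lambda = \Tr(M)/d_\rho$, as claimed.

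The one non-trivial ingredient is the underlying classical Schur's lemma (an intertwiner between irreducibles is either zero or an isomorphism, and a self-intertwiner over $\CC$ is scalar), which I would treat as a standard fact cited from a representation theory text such as Serre~\cite{S77}. Beyond that, the only care required is in the change of variables used to establish the intertwining identity; this is a short and routine manipulation, so there is no substantial obstacle in the argument.
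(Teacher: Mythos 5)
Your proof is correct and is the standard derivation. The paper itself does not prove this averaged form of Schur's lemma—it states it as a known result and cites a textbook (Serre)—so there is no competing argument to compare against; your reduction to the classical Schur's lemma via the intertwining property of $N = \EE_{g}[\rho(g) M \sigma(g^{-1})]$, followed by the trace computation to identify the scalar, is exactly the argument one finds in such references.
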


\subsection{Non-abelian Fourier analysis}
Let us first recall the Fourier transform for abelian groups. Let $G$ be a finite abelian group and $\wh G$ the group of multiplicative characters $\chi \colon G \to \CC$. For any function $f \colon G \to \CC$, we can define its Fourier transform $\wh f \colon \wh G \to \CC$ by $\wh f(\chi) = \EE_{g \in G} f(g)\chi(g)$. We then have the inversion formula $f(g) = \sum_{\chi \in \wh G} \wh f(\chi) \ol{\chi(g)}$.

From now on, let $G$ be any finite group, not necessarily abelian, and $\wh G$ the set of distinct irreducible unitary representations $\rho \colon G \to U(d_\rho)$ of $G$. For any $f \colon G \to \CC$ and any irreducible representation $\rho \in \wh G$, define the Fourier transform of $f$ at $\rho$ by
\begin{equation*} \label{eq:fourier}
\wh f (\rho) = \EE_{g \in G} f(g) \rho(g).
\end{equation*}
Note that $\wh f(\rho)$ is a $d_\rho \x d_\rho$ matrix, so its dimension varies with $\rho$. 

For any two complex-valued matrices $A = (a_{st})_{s \in [m],t \in [n]}$ and $B = (b_{st})_{s \in [m],t \in [n]}$ with the same dimensions, define the Hilbert--Schmidt inner product
\[
\ang{A,B}_\HS = \Tr(AB^*) = \sum_{s \in [m], t \in [n]} a_{st} \ol{b_{st}}
\]
and the Hilbert--Schmidt norm
\[
\|A\|_\HS = \sqrt{\ang{A,A}_\HS} = \sqrt{ \sum_{s \in [m], t \in [n]} |a_{st}|^2}.
\]

We have a Fourier inversion formula:
\begin{equation} \label{eq:fourier-inv}
f(g) = \sum_{\rho \in \wh G} d_\rho \langle \wh f(\rho),\rho(g)\rangle_{\HS}
= \sum_{\rho \in \wh G} d_\rho \Tr(\wh f(\rho) \rho(g)^*).
\end{equation}
A proof of this formula can be found in~\cite[Section 2.10]{W10}.
We also have Plancherel's identity: for all $f_1, f_2 \colon G \to \CC$,
\begin{equation*} \label{eq:par-inner}
	\EE_{g \in G} f_1(g)\ol{f_2(g)} 
	= \sum_{\rho \in \wh G} d_\rho \langle \wh{f_1}(\rho), \wh{f_2}(\rho)\rangle_\HS.
\end{equation*}
In particular, setting $f_1 = f_2 = f$, we have that for all $f \colon G \to \CC$,
\begin{equation} \label{eq:par-norm}
	\|f\|_2^2 = \EE_{g \in G} \left[|f(g)|^2 \right]
	= \sum_{\rho \in \wh G} d_\rho \| \wh f(\rho)\|_\HS^2.
\end{equation}
The Fourier transform converts convolution into multiplication. For all $f_1, f_2 \colon G \to \CC$, writing
\[
f_1 * f_2 (g) := \EE_{h \in G} f_1(gh^{-1}) f_2(h)
\]
for convolution, we have, for all $\rho \in \wh G$,
\begin{equation} \label{eq:fourier-conv}
\wh{f_1 * f_2} (\rho) = \EE_{g,h \in G} f_1(g)f_2(h)\rho(gh)
= \left(\EE_{g\in G} f_1(g)\rho(g)\right)\left(\EE_{h\in G} f_2(h)\rho(h)\right)
= \wh{f_1} (\rho) \wh{f_2} (\rho).
\end{equation}
Or, more succinctly, $\wh{f_1 * f_2} = \wh{f_1}\wh{f_2}$.

Conceptually, the Fourier transform is a simultaneous block diagonalization of all $f \colon G \to \CC$ (viewed as linear maps on the space of functions $x \colon G \to \CC$ via the convolution $x \mapsto f * x$). For each $\rho \in \wh G$, the $d_\rho \x d_\rho$ matrix $\wh f(\rho)$ appears exactly $d_\rho$ times in the block diagonalization (the multiplicity $d_\rho$ is the multiplicity of $\rho$ in the regular representation of $G$). As a result, the spectral norm of $f$ must be the maximum of the spectral norms of the components of the block diagonalization:
\begin{equation}
  \label{eq:fourier-norm}
  \|f\| = \max_{\rho \in \wh G} \|\wh f(\rho) \|,
\end{equation}
where on the left-hand side $\|f\|$ is the spectral norm \eqref{eq:gp-spec} of a function $f \colon G \to \CC$ and on the right-hand side $\|\wh f(\rho)\|$ is the spectral norm~\eqref{eq:mat-spec} of a matrix. Let us also give a direct proof of \eqref{eq:fourier-norm} for the convenience of the reader. Using \eqref{eq:fourier-conv} and Plancherel's identity \eqref{eq:par-norm}, we have
\begin{equation} \label{eq:fourier-norm-pf1}
\|f\|^2 
= \sup_{\substack{x \colon G \to \CC \\ \|x\|_2 \le 1}} \|f * x \|_2^2
= \sup_{\substack{x \colon G \to \CC \\ \|x\|_2 \le 1}} \sum_{\rho \in \wh G} d_\rho \|\wh f(\rho) \wh x(\rho) \|_\HS^2.
\end{equation}
If $A$ and $B$ are $d \x d$ matrices and $b_1, \dots, b_d$ are the columns of $B$, then $\|AB\|_\HS^2 = |Ab_1|^2 + \cdots + |Ab_d|^2 \le \|A\|^2(|b_1|^2 + \cdots |b_d|^2) = \|A\|^2 \|B\|_\HS^2$. Therefore, for any $x \colon G \to \CC$, we have
\begin{align*}
\sum_{\rho \in \wh G} d_\rho \|\wh f(\rho) \wh x(\rho) \|_\HS^2
&\le \sum_{\rho \in \wh G} d_\rho \|\wh f(\rho)\|^2 \|\wh x(\rho) \|_\HS^2 
\\
&\le \bigl(\max_{\rho \in \wh G} \|\wh f(\rho) \|\bigr)^2 \sum_{\rho \in \wh G} d_\rho \|\wh x(\rho) \|_\HS^2
= \bigl(\max_{\rho \in \wh G} \|\wh f(\rho) \|\bigr)^2  \|x\|_2^2,
\end{align*}
where the final step uses Plancherel \eqref{eq:par-norm} again. Combining with \eqref{eq:fourier-norm-pf1}, this shows that $\|f\| \le \max_{\rho \in \wh G} \|\wh f(\rho)\|$. Conversely, for a fixed $\rho$, by setting the columns of $\wh x(\rho)$ to be the top right singular vector of $\wh f(\rho)$ so that $\|\wh f(\rho)\wh x (\rho)\|_\HS = \|\wh f(\rho)\| \|\wh x (\rho)\|_\HS$, and setting $\wh x (\sigma) = 0$ for all $\sigma \ne \rho$, we obtain $\|f\| \ge \| \wh f(\rho)\|$ for each $\rho \in \wh G$. This proves \eqref{eq:fourier-norm}.

\subsection{Singular value decompositions}
Every $m \times n$ complex matrix $M$ of rank $r$ has a singular value decomposition (SVD):
\[
M = \lambda_1 u_1v_1^* + \dots + \lambda_r u_r v_r^*,
\]
where $\lambda_1 \ge \cdots \ge \lambda_r > 0$ (known as the singular values), $u_1, \dots, u_r$ are mutually orthogonal unit vectors in $\CC^m$, and $v_1, \dots, v_r$ are mutually orthogonal unit vectors in $\CC^n$.

Let $f \colon G \to \CC$. For each $\rho \in \wh G$, take any singular value decomposition of $\wh f(\rho)$:
\[
\wh f(\rho) = \lambda_1^\rho u_1^\rho v_1^{\rho*} + \cdots + \lambda_{d_\rho}^\rho u_{d_\rho}^\rho v_{d_\rho}^{\rho*},
\]
where $\lambda_1^\rho \ge \dots \ge \lambda_{d_\rho}^\rho \ge 0$ (we allow some of the $\lambda_j^\rho$'s to be zero), and $\{u_1^\rho, \dots, u_{d_\rho}^\rho\}$ and  $\{v_1^\rho, \dots, v_{d_\rho}^\rho\}$ are both orthonormal bases of $\CC^{d_\rho}$. For any $\lambda \ge 0$, $u,v \in \CC^{d_\rho}$, we have $\Tr(\lambda u v^* \rho(g)^*) = \lambda  v^* \rho(g)^* u$. Therefore, by the Fourier inversion formula \eqref{eq:fourier-inv}, we have the following SVD for any $f \colon G \to \CC$:
\begin{equation}
  \label{eq:f-svd}
  f(g) = \sum_{\rho \in \wh G} d_\rho \sum_{k=1}^{d_\rho} \lambda_k^\rho v_k^{\rho*} \rho(g)^* u_k^\rho.
\end{equation}

\subsection{Proof of Theorem~\ref{thm:gp-spec-G}}\label{sec:proof-main}

Now we give the representation-theoretic proof that $\|f\| = \|f\|_\G$ for any $f \colon G \to \CC$.

Consider the SVD \eqref{eq:f-svd} of $f$. From \eqref{eq:fourier-norm}, we see that $\|f\|$ is the maximum of the top singular value $\lambda_1^\rho$ ranging over all $\rho \in \wh G$. Let $\sigma \in \wh G$ be such that $\|f\| = \lambda_1^\sigma$.

We already know from \eqref{eq:mat-G-spec} that $\|f\|_\G \le \|f\|$. To show $\|f\|_\G \ge \|f\|$, it suffices to exhibit $x,y \colon G \to B(\HH)$ so that $\EE_{g,h} f(gh^{-1}) \ang{x(g),y(h)} = \|f\|$ (here $\ang{x,y} := \ol{x_1}y_1 + \cdots + \ol{x_n}y_n$ for $x,y \in \CC^n$). Set $\HH = \CC^{d_\sigma}$ and, for all $g,h \in G$,
\[
x(g) = \ol{\sigma(g^{-1}) v_1^\sigma} \quad \text{and} \quad y(h) = \ol{\sigma(h^{-1}) u_1^\sigma},
\]
where $u_1^\sigma$ and $v_1^\sigma$ are singular vectors from the SVD \eqref{eq:f-svd}. Let $\rho \in \wh G$ and $1 \le k \le d_\rho$. Writing $u = u_k^\rho$ and $v = v_k^\rho$ to reduce clutter in notation, we have
\begin{align*}
\EE_{g,h}d_\rho  v^*\rho (gh^{-1}) u \ang{x(g),y(h)}
&=
\EE_{g,h}d_\rho v^*\rho (gh^{-1}) u \ol{\ang{y(h),x(g)}}
\\
&=
\EE_{g,h}d_\rho v^*\rho (gh^{-1}) u \ang{\sigma(h^{-1}) u_1^\sigma,\sigma(g^{-1}) v_1^\sigma}
\\
&=
\EE_{g,h}d_\rho v^*\rho (gh^{-1}) u u_1^{\sigma*}\sigma(hg^{-1}) v_1^\sigma
\\
&=
\EE_{g}d_\rho v^*\rho (g) u u_1^{\sigma*}\sigma(g^{-1}) v_1^\sigma.
\end{align*}
By Schur's lemma (Theorem~\ref{thm:schur}), if $\rho \ne \sigma$, then this is zero. If $\rho = \sigma$, then it equals
\begin{align*}
  v^* \Tr(u u_1^{\sigma*})v_1^\sigma = \ang{u_1^\sigma,u}\ang{v,v_1^\sigma},
\end{align*}
which is 1 if $k = 1$, i.e., $u = u_1^\sigma$ and $v = v_1^\sigma$, and zero otherwise due to the orthogonality of the singular vectors. It follows, by the SVD \eqref{eq:f-svd} and the above calculation, that
\[
\EE_{g,h} f(gh^{-1}) \ang{x(g),y(h)} = \lambda_1^\sigma = \|f\|.
\]
This shows that $\|f\|_\G \ge \|f\|$, completing the proof. \qed

\begin{remark}
  The above proof shows that in defining $\|f\|_\G$ for $f \colon G \to \CC$, one only needs to take $\HH$ (in the definition of the Grothendieck norm) to have dimension at most $\max_{\rho \in \wh G} d_\rho$, the maximum dimension of an irreducible representation of $G$. In other words, if we define intermediate Grothendieck-type norms
\[
\|f\|_{\G,k} := \sup_{x,y \colon G \to B(\CC^k)} \left| \EE_{g,h\in G} f(gh^{-1}) \ang{x(g),y(h)} \right|
\]
for every positive integer $k$, so that
\[
\|f\|_{\infty \to 1} = \|f\|_{\G,1} \le \|f\|_{\G,2} \le \cdots \le \|f\|_{\G,m+n}  = \|f\|_\G,
\]
then $\|f\|_\G = \|f\|_{\G,k}$ whenever all irreducible representations of $G$ have dimension at most $k$. In particular, if $G$ is abelian, then all irreducible representations have dimension 1, so $\|f\| = \|f\|_{\G,1} = \|f\|^\CC_{\infty\to 1}$, as shown earlier in Theorem~\ref{thm:abelian}.

\end{remark}

\begin{dajauthors}
\begin{authorinfo}[dc]
  David Conlon\\
  Mathematical Institute\\
  University of Oxford\\
  Oxford OX2 6GG, United Kingdom \\
  david\imagedot{}conlon\imageat{}maths\imagedot{}ox\imagedot{}ac\imagedot{}uk
\\
  \url{https://www.maths.ox.ac.uk/people/david.conlon}
\end{authorinfo}
\begin{authorinfo}[yz]
  Yufei Zhao\\
  Mathematical Institute\\
  University of Oxford\\
  Oxford OX2 6GG, United Kingdom \\
  yufei\imagedot{}zhao\imageat{}maths\imagedot{}ox\imagedot{}ac\imagedot{}uk
\\
  \url{https://yufeizhao.com}
\end{authorinfo}
\end{dajauthors}

\end{document}